\numberwithin{equation}{section}
\definecolor{Mygrey}{gray}{0.8}
\newcommand{\bea}{\begin{eqnarray}}
\newcommand{\eea}{\end{eqnarray}}
\newcommand{\be}{\begin{eqnarray*}}
\newcommand{\ee}{\end{eqnarray*}}
\newtheorem{theorem}{Theorem}[section]
\newtheorem{lemma}{Lemma}[section]
\newtheorem{corollary}{Corollary}[section]
\newtheorem{proposition}{Proposition}[section]
\newtheorem{example}{Example}[section]
\begin{document}
\title[Dynamics of Linear Systems over Finite Commutative Rings]{ Dynamics of Linear Systems \\ over Finite Commutative Rings}
\author[Yangjiang Wei]{Yangjiang Wei}
\address{School of Mathematical Sciences, Guangxi Teachers Education University, Nanning 530023, P. R. China}
\email{gus02@163.com}
\author[Guangwu Xu]{Guangwu Xu}
\address{Department of Electrical Engineering and Computer Sciences, University of Wisconsin, Milwaukee, WI 53201, USA} 
\email{gxu4uwm@uwm.edu}
\author[Yi Ming Zou]{Yi Ming Zou$^*$}
\address{Department of Mathematical Sciences, University of Wisconsin, Milwaukee, WI 53201, USA} \email{ymzou@uwm.edu}
\thanks{* Corresponding author. Email: ymzou@uwm.edu}
\keywords{Finite rings, Modules, Linear systems, Cycle lengths}
\subjclass[2010]{13M99, 13P99, 15B33}
\maketitle
\begin{abstract}
The dynamics of a linear dynamical system over a finite field can be described by using the elementary divisors of the corresponding matrix. It is natural to extend the investigation to a general finite commutative ring. In a previous publication, the last two authors developed an efficient algorithm to determine whether a linear dynamical system over a finite commutative ring is a fixed point system or not. The algorithm can also be used to reduce the problem of finding the cycles of such a system to the case where the system is given by an automorphism. Here, we further analyze the cycle structure of such a system and develop a method to determine its cycles.
 
\end{abstract}
\section{Introduction}
\par
Let $R$ be a commutative ring with $1$, let $M$ be an $R$-module, and let $f: M\longrightarrow M$ be an $R$-module endomorphism. We may consider $f$ as a dynamical system via iteration:
\be
f,\;\; f^2 = f\circ f,\;\; \cdots, \;\;f^m=\underbrace{f\circ f\circ\cdots\circ f}_{\mbox{$m$ copies}},\;\; \cdots,
\ee
and investigate the behaviors of the system. For our convenience, we set $f^0 = id$. By studying the dynamics of the system, we mean to investigate its possible cycles (including fixed points) generated by the elements of $M$. It is clear that if the cardinality of $M$ is infinite, then there may not be any cycle except the obvious fixed point $0$. But if $M$ is finite, then for any initial input $a\in M$, the sequence $f^m(a)$, $m\ge 0$, will eventually either stabilize or enter a cycle. We should restrict our attention to the case where both $R$ and $M$ are finite in this paper, and we call such a system {\it a linear dynamical system over the finite ring $R$}. 
\par\medskip
If $R = F$ is a finite field, then $M$ is a finite dimensional vector space over $F$, and the dynamics of $f$ can be described by using the elementary divisors of the $F$-linear map $f$. Suppose the elementary divisors of $f$ are (see \cite{Hu}, pp. 356-357):
\be
p_i^{m_{ij}},\;\; 1\le i\le s,\;\; 1\le j\le k_i,
\ee
where each $p_i$ is an irreducible polynomial in $F[x]$, and the $m_{ij}$ are integers such that for each $i$,
\be
m_{i1}\ge m_{i2}\ge\cdots\ge m_{ik_i}> 0.
\ee
Then there are $f$-cyclic subspaces $M_{ij}$ (i.e. there exists a vector $v\in M_{ij}$ and an integer $k\ge 0$ such that $(v, f(v), \ldots, f^k(v))$ is a basis of $M_{ij}$) with minimal polynomial $p_i^{m_{ij}}$, $1\le i\le s,\; 1\le j\le k_i$, such that $M$ is a direct sum of the $M_{ij}$'s. 
\par 
It is easy to see (over any ring $R$) that if $M = M_1\oplus M_2$ is a direct sum decomposition of $f$-invariant submodules (we will just call them $f$-submodules), and $C_i$ is a cycle in $M_i$ of length $c_i$ ($i=1,2$), then the direct product
\be
C_1\times C_2 := \{(a,b)\;|\; a\in M_1,\;b\in M_2\}
\ee
offers $GCD(c_1,c_2)$ cycles of length $LCM(c_1,c_2)$ for the module $M$. So we can just work with each direct summand independently.   
\par
Therefore, the problem of finding the cycles (including fixed points) of $f$ (over a field $F$) reduces to the problem of finding the cycles of the restricted linear maps 
\be
f_{ij} = f|_{M_{ij}}\; : \;M_{ij}\longrightarrow M_{ij},\;\; 1\le i\le s,\;\; 1\le j\le k_i.
\ee
\par
If $p_i = x$, the corresponding linear maps $f_{ij},\; 1\le j\le k_i$, are nilpotent and thus all have just one fixed point $0$ and have no cycle of length $> 1$. So the question is to find the cycle structures of those $f_{ij}$ such that the corresponding $p_i$ are not $x$ (these $f_{ij}$ are automorphisms of the $M_{ij}$). For these cases, since the characteristic polynomial of each $f_{ij}$ is equal to its minimal polynomial, Elspas' formula describes its cycle structure \cite{Els}\cite{He} (we use $g_{ij}^k$ instead of $p_i^{m_{ij}}$ to simplify the notation in the formula):
\be
G_{ij} = 1 + \sum_{k=1}^{s}\frac{q^{mk}-q^{m(k-1)}}{c_k}\mathcal{C}_k,
\ee
where $q = |F|$, $1$ corresponds to the fixed point $0$, $\mathcal{C}_k$ is a cycle of length $c_k$, and $c_k = \mbox{ord}(g_{ij}^k)$ (i.e. $c_k$ is the least positive integer $t$ such that $g_{ij}^k$ divides $x^t - 1$).  This formula gives a nice description of the cycle structure, though the actual computation is more involved: one first computes the normal form of the matrix $xI - A$, where $A$ is the matrix of $f$ with respect to a basis of $M$, or computes its minimum polynomial $m(f)$, then factors the polynomials that appear in the normal form of $xI - A$ (or $m(f)$) to get the $g_{ij}^k$, and then computes the order of each $g_{ij}^k$.  
\par
Note that if $g_{ij}^k$ divides $x^t-1$, then all $g_{ij}^s,\;0\le s\le k$, divide $x^t-1$. So by the comment about the cycles of a direct sum of $f$-modules, we have:
\begin{lemma}\label{L11}
The $F$-module endomorphism $f$ possesses a cycle of maximum length, say $c$, such that all other cycle lengths are divisors of $c$.
\end{lemma}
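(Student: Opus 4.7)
The plan is to piece the statement together from the two ingredients already assembled in the excerpt: the direct sum decomposition of $M$ into the $f$-cyclic subspaces $M_{ij}$ with Elspas' description of the cycles of each $f_{ij}$, and the direct product rule recalled just above the lemma (cycles of lengths $c_1,c_2$ in two summands combine into cycles of length $\mathrm{lcm}(c_1,c_2)$). I would build the maximal cycle length summand by summand, then glue.

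First I would handle a single restriction $f_{ij}$. If $p_i=x$ then $f_{ij}$ is nilpotent and the only cycle length is $1$, so the claim is trivial there. If $p_i\ne x$, Elspas' formula shows that the cycle lengths occurring in $f_{ij}$ are precisely the numbers $c_k=\mathrm{ord}(p_i^k)$ for $k=1,\dots,m_{ij}$ (together with the fixed point of length $1$). By the divisibility remark stated immediately before the lemma — if $p_i^{m_{ij}}\mid x^t-1$ then $p_i^k\mid x^t-1$ for every $k\le m_{ij}$ — each $c_k$ divides $c_{ij}:=\mathrm{ord}(p_i^{m_{ij}})$. Hence $f_{ij}$ has a maximum cycle length $c_{ij}$ such that every cycle length in $M_{ij}$ divides $c_{ij}$, and Elspas' formula shows that a cycle of length exactly $c_{ij}$ is actually attained (the coefficient of the largest cycle class is positive).

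Next I would globalize. Set $c:=\mathrm{lcm}_{i,j}\,c_{ij}$, where the LCM runs over all pairs with $p_i\ne x$ (if there are none, take $c=1$). Using the direct sum $M=\bigoplus_{i,j}M_{ij}$ together with the direct product rule recalled in the excerpt, any orbit in $M$ projects to orbits in each $M_{ij}$, and its length is the LCM of the component orbit lengths; since each such component length divides some $c_{ij}$, and hence divides $c$, every cycle length in $M$ divides $c$. Conversely, choosing for each $(i,j)$ with $p_i\ne x$ an element $v_{ij}\in M_{ij}$ whose $f_{ij}$-orbit has maximal length $c_{ij}$ (and $0$ in the nilpotent summands), the tuple $(v_{ij})$ has $f$-orbit of length $\mathrm{lcm}_{i,j}\,c_{ij}=c$, so this maximum is attained.

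No step presents a serious obstacle once Elspas' formula and the direct product observation are in hand; the only point that deserves care is the divisibility step $c_k\mid c_{ij}$ inside a single $M_{ij}$, which is exactly what the remark preceding the lemma was set up to deliver, and the verification that the globally defined $c$ is actually realized by a single element, which reduces to the standard fact that the order of a tuple in a direct product is the LCM of the coordinate orders.
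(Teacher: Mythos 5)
Your proposal is correct and follows essentially the same route as the paper, which derives the lemma directly from the remark that $g_{ij}^s \mid x^t-1$ for all $s\le k$ whenever $g_{ij}^k \mid x^t-1$ (so within each cyclic summand all cycle lengths from Elspas' formula divide the maximal one, $\mathrm{ord}(g_{ij}^{m_{ij}})$), combined with the direct-sum observation that component cycles of lengths $c_1,c_2$ yield cycles of length $\mathrm{LCM}(c_1,c_2)$. You merely spell out the glueing step (taking the LCM over all summands and exhibiting a tuple realizing it) that the paper leaves implicit.
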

\par
For a general finite commutative ring $R$, since the factorization in $R[x]$ may not be unique, different approaches are needed. The study of linear dynamical systems over rings of the form $\mathbb{Z}_q$ was suggested in \cite{Co2} due to its relation with monomial dynamical systems over finite fields. In \cite{Bo} an approach by using Fitting's lemma \cite{Hu}: there is a positive integer $k$ such that $M = \mbox{Ker}f^k\oplus\mbox{Im}f^k$  was suggested. However, the key to the success of the proposed approach, which is to find a proper $k$ and compute $f^k$ efficiently, was not treated. In \cite{XZ}, an upper bound for $k$ was determined, and an algorithm that runs in time $O(n^3\log(n\log(q)))$, where $n$ is given by $M = R^n$ and $q =|R|$, to compute $f^k$ was developed. The results of \cite{XZ} provide a solution to the problem of determining whether or not such a system is a fixed point system, and also reduce the problem of finding the cycle structure of such a system to that of an automorphism (since $f:\mbox{Im}f^k\longrightarrow \mbox{Im}f^k$ is bijective). 
\par
The main idea of \cite{XZ} is, to determine the cycles of $f$, we can just consider the $f$-module $M$ as an abelian group ($\mathbb{Z}$-module) and determine the lengths of the cycles. After obtaining the information on the lengths of the cycles, we can find the cycles by solving linear systems of the form $(f^c - I)X = 0$, where $I$ is the identity map and $c$ is a cycle length. By the structure theorem of finitely generated abelian groups \cite{Hu}, we can assume that
\bea\label{E1}
\mbox{Im}f^k =\mathbb{Z}_{p_1^{a_1}}\oplus \mathbb{Z}_{p_2^{a_2}}\oplus\cdots\oplus\mathbb{Z}_{p_m^{a_m}},
\eea
where $p_1,\ldots, p_m$ are (not necessarily distinct) primes, and $a_1,\ldots, a_m$ are (not necessarily distinct) positive integers. 
\par
We further note that an $f$-module as in (\ref{E1}) can be written as a direct sum of $f$-submodules such that each of these submodules is formed by grouping those $\mathbb{Z}_{p_i^{a_i}}$'s with the same prime $p_i$ together. As discussed before, whenever we have a direct sum of $f$-submodules, we can just work with each direct summand independently. Therefore, our goal here is to analyze the cycle structure of an automorphism of an abelian group of the form ($p$ is a prime):
\bea\label{E2}
\hspace{0.6cm} M = \mathbb{Z}_{p^{a_1}}\times \mathbb{Z}_{p^{a_2}}\times\cdots\times\mathbb{Z}_{p^{a_m}},\; 1\le a_1\le a_2 \le \ldots\le a_m.
\eea
Since abelian groups are $\mathbb{Z}$-modules, an endomorphism of $M$ can be represented by an integer matrix $A$; and if all $a_i = 1$, i.e. $M = \mathbb{Z}^m_p$, it is an endomorphism of the vector space $M$ over the field $\mathbb{Z}_p$.  
In the case where all $a_i$'s are the same, i.e. $M=\mathbb{Z}^m_{p^a}$, an approach was developed in \cite{De} for an arbitrary endomorphism of $M$ using number theory techniques based on congruence of integers.
\par
In this paper, we consider the general case where $M$ is defined by  (\ref{E2}) such that the $a_i$'s are not necessary the same. Our emphasis is on the computation of the cycles; and for that, we need to be able to compute the possible cycle lengths efficiently. In Section 2, we give an upper bound for the cycle lengths such that all possible cycle lengths are factors of the given upper bound. The factors of the given upper bound are obtained from the induced linear systems over the finite field $\mathbb{Z}_p$, and thus there is no extra work needed to factor the upper bound. The results of this paper and the results in \cite{XZ} together provide an algorithm to determine the cycles structure of a linear dynamical system over a finite commutative ring. The algorithm is given in Section 3. We consider three examples in Section 4. The first example shows that the given upper bound for the cycle lengths is sharp, and the third example is a linear system derived from a real world cancer regulatory network. 
\section{Result}
We begin by collecting some general facts. The following lemma holds for any ring $R$.
\par
\begin{lemma}\label{L21}
 Let $M, N$ be $R$-modules, let $\varphi: M \rightarrow N$ be an onto $R$-module homomorphism, and let $K = \ker\varphi$. If $f: M \rightarrow M$ is an $R$-module endomorphism such that $f(K)\subset K$, then $f$ induces an $R$-module endomorphism $\bar{f}$ of $N$ by $\bar{f}(\varphi(m)) = \varphi(f(m)),\; m\in M$. 
\end{lemma}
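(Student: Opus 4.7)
The plan is to verify that the formula $\bar{f}(\varphi(m)) = \varphi(f(m))$ unambiguously determines a function $N \to N$ and then check that this function respects the $R$-module operations. Since $\varphi$ is surjective, every element of $N$ can be written in the form $\varphi(m)$ for some $m \in M$, so the formula assigns at least one value to every element of $N$; the question is whether the value is independent of the choice of preimage. Once well-definedness is established, additivity and $R$-scaling follow formally from the same properties of $\varphi$ and $f$.

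The main (indeed, essentially only) obstacle is well-definedness, and this is exactly where the hypothesis $f(K) \subset K$ is used. My plan here is the standard one: assume $\varphi(m_1) = \varphi(m_2)$, deduce $m_1 - m_2 \in K$, apply $f$ to obtain $f(m_1) - f(m_2) = f(m_1 - m_2) \in f(K) \subset K$, and then conclude $\varphi(f(m_1)) = \varphi(f(m_2))$. This gives a well-defined set map $\bar{f}: N \to N$ satisfying $\bar{f}\circ \varphi = \varphi \circ f$.

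With this in hand, I would finish by checking $R$-linearity in one line: given $r,s \in R$ and $n_1 = \varphi(m_1), n_2 = \varphi(m_2)$ in $N$, the $R$-linearity of $\varphi$ and $f$ gives $\bar{f}(rn_1 + sn_2) = \bar{f}(\varphi(rm_1 + sm_2)) = \varphi(f(rm_1 + sm_2)) = r\varphi(f(m_1)) + s\varphi(f(m_2)) = r\bar{f}(n_1) + s\bar{f}(n_2)$. No deeper structure of $R$, $M$, or $N$ is needed, so the argument works over an arbitrary ring as stated.
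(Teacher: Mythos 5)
Your proof is correct and is precisely the ``straightforward verification of the definition'' that the paper's one-line proof alludes to: well-definedness via $m_1 - m_2 \in K \Rightarrow f(m_1)-f(m_2)\in f(K)\subset K$, followed by the routine check of $R$-linearity. No difference in approach; you have simply written out the details the paper omits.
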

\begin{proof} 
The proof is a straightforward verification of the definition. 
\end{proof}
 \par\medskip
  Recall that we work with a finite commutative ring $R$ and a finite $R$-module, so it is possible to use a reduction approach to find the cycles of an $R$-module endomorphism $f$. For this purpose, we investigate the relationships among the cycles of the $f$-modules $M$, $N$, and $K$ (notation as in Lemma 2). For any $m\in M$, we denote by $\ell(m)$ the length of the cycle generated by $m$ (if $m$ lies on a cycle). We have the following simple fact about fixed points:
 \par
 \begin{proposition} Notation as in Lemma \ref{L21}. Let $v$ be a fixed point of $N$ and let $u$ be a fixed point of $M$ such that $\varphi(u) = v$. Then $u_1$ is a fixed point of $M$ such that $\varphi(u_1) = v$ if and only if $u_1 = u+w$ for a fixed point $w\in K$.  In particular, the number of fixed points $u$ of $M$ such that $\varphi(u) = v$ is $0$ or equal to the number of fixed points of $K$. 
 \end{proposition}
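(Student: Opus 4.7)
The plan is to prove the biconditional by a standard linearity argument and then translate the resulting bijection into the counting statement. The key observation is that everything in sight is an $R$-module homomorphism, so fixed-point sets are stable under taking differences.

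For the forward direction, I would suppose that $u_1\in M$ is a fixed point with $\varphi(u_1)=v$ and set $w:=u_1-u$. Applying $\varphi$ gives $\varphi(w)=v-v=0$, so $w\in K$; applying $f$ gives $f(w)=f(u_1)-f(u)=u_1-u=w$, so $w$ is a fixed point. Since we are in the setting of Lemma~\ref{L21}, $f(K)\subset K$, so $w$ being a fixed point of $f$ that lies in $K$ means exactly that it is a fixed point of the restriction $f|_K$, as needed. Then $u_1=u+w$ has the claimed form.

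For the reverse direction, I would start with $u_1=u+w$ where $w\in K$ satisfies $f(w)=w$, and simply verify the two required properties: $\varphi(u_1)=\varphi(u)+\varphi(w)=v+0=v$, and $f(u_1)=f(u)+f(w)=u+w=u_1$. Both are one-line computations using additivity of $\varphi$ and $f$ and the hypotheses.

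For the ``in particular'' clause, I would argue as follows. If no fixed point of $M$ lies above $v$, the count is $0$. Otherwise, fix one such $u$; the biconditional just established says that the map
\[
\{\,u_1\in M : f(u_1)=u_1,\ \varphi(u_1)=v\,\}\longrightarrow \{\,w\in K : f(w)=w\,\},\qquad u_1\mapsto u_1-u,
\]
is well-defined and surjective, and it is obviously injective, hence a bijection. This yields the equality of cardinalities.

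The main (and essentially only) obstacle is making sure the hypotheses of Lemma~\ref{L21} are invoked correctly so that $f|_K$ is actually an endomorphism of $K$; once that is in place, the proof is a direct verification requiring no nontrivial step. I do not expect any genuine difficulty here.
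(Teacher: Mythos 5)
Your proof is correct and follows essentially the same route as the paper: writing $u_1=u+w$ with $w\in K$ and computing $f(u+w)=u+f(w)$ to reduce the fixed-point condition to $f(w)=w$. The paper compresses this into one line and leaves the counting bijection implicit, while you spell out both directions and the bijection $u_1\mapsto u_1-u$ explicitly, but the underlying argument is identical.
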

 \begin{proof} 
 We have that $u_1 = u+w$ is a fixed point $\Leftrightarrow$ $f(u+w) = u+f(w) =u+w$ $\Leftrightarrow$ $f(w)=w$. 
 \end{proof}
 \par\medskip
From now on, we assume that $f$ is an automorphism. Then every element of $M$ lies in a cycle. Note that by our assumption on $M$, $p^{a_m}(M) = 0$ (see (\ref{E2})), thus $p^{a_m}(K) = 0$. Suppose that $a$ is the least positive integer $i$ such that $p^i(K) = 0$.  For $u\in M$, let $\varphi(u) = v\in N$, and let $\ell(v)=s$. With these assumptions, we have $f^{s}(u) = u+w$ for some $w\in K$. Let $\ell(w) = k$.  We will write $[s,k]$ (respectively, $(s,k)$) for the least common multiple (respectively, the greatest common divisor) of $s$ and $k$.
 \begin{lemma}\label{L22}
 There exists $0\le b\le a$ such that $\ell(u)=p^b[s,k]$.
 \end{lemma}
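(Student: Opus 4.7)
The plan is to convert the problem of finding $\ell(u)$ into the problem of finding the additive order of a single element of $K$, and then exploit the fact that $K$ is a finite abelian $p$-group. Since $\varphi$ intertwines $f$ and $\bar f$, and $v=\varphi(u)$ lies in a $\bar f$-cycle of length $s$, any $t$ with $f^t(u)=u$ must satisfy $s\mid t$. Writing $h:=f^s$, I would set $\ell(u)=s\,j^*$ where $j^*$ is the least positive integer with $h^{j^*}(u)=u$. Iterating the relation $h(u)=u+w$ and using $h(K)\subset K$ gives
\[
h^j(u) \;=\; u+\sigma_j(w),\qquad \sigma_j \;:=\; 1+h+h^2+\cdots+h^{j-1},
\]
so $j^*$ is the least $j\ge 1$ with $\sigma_j(w)=0$ in $K$.

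Next I would pin down $j^*$ among the multiples of $j_0:=[s,k]/s$. Note first that the $h$-cycle length of $w$ is exactly $j_0$: $h^j(w)=f^{sj}(w)=w$ iff $k\mid sj$ iff $(k/(s,k))\mid j$. The operator identity $(h-1)\sigma_j=h^j-1$, applied to $w$, shows that $\sigma_j(w)=0$ forces $h^j(w)=w$, hence $j_0\mid j$; in particular $j_0\mid j^*$. Writing $j^*=\ell\,j_0$ and using that $h^{j_0}$ fixes $w$ and hence each $h^r(w)$, the sum telescopes into $\ell$ equal blocks:
\[
\sigma_{\ell j_0}(w) \;=\; \sum_{i=0}^{\ell-1} h^{ij_0}\!\bigl(\sigma_{j_0}(w)\bigr) \;=\; \ell\,\sigma_{j_0}(w).
\]
Therefore $\ell$ is exactly the additive order of $\sigma_{j_0}(w)$ in $K$.

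Finally, since $K\subset M$ is a subgroup of the finite abelian $p$-group in (\ref{E2}), it is itself a $p$-group, so the order of $\sigma_{j_0}(w)$ is a power of $p$, say $p^b$. The defining property of $a$, namely $p^aK=0$, gives $p^b\mid p^a$, so $0\le b\le a$. Combining,
\[
\ell(u) \;=\; s\,j^* \;=\; s\,\ell\,j_0 \;=\; p^b\,[s,k],
\]
as required. The main obstacle I expect is the middle step: showing that the \emph{least} $j$ with $\sigma_j(w)=0$ is forced to be a multiple of $j_0$, rather than merely that such $j$ form a subgroup of $\mathbb{Z}$. A priori, cancellation in $K$ might produce a vanishing partial sum at some $j<j_0$; the telescoping identity $(h-1)\sigma_j=h^j-1$ is exactly what rules this out and funnels the answer into the claimed form $p^b[s,k]$.
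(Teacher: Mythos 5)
Your proof is correct and follows essentially the same route as the paper's: both iterate $f^s(u)=u+w$ to form the geometric partial sums $\sigma_j(w)=w+f^s(w)+\cdots+f^{(j-1)s}(w)$, both isolate the $f^s$-fixed element $w_1=\sigma_{j_0}(w)$ (the paper's $w+f^s(w)+\cdots+f^{(n-1)s}(w)$ with $n=j_0=[s,k]/s$), and both finish using $p^aK=0$. Your version is marginally sharper, since it pins down the exponent exactly --- $\ell(u)=[s,k]\cdot\operatorname{ord}(w_1)$ with $\operatorname{ord}(w_1)=p^b$ the additive order of $w_1$ in $K$ --- whereas the paper only sandwiches $[s,k]\mid\ell(u)\mid p^a[s,k]$ to conclude $c_0=p^b$.
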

 \begin{proof} 
 Let $\ell(u) = c$. Since $f^c(u) = u$, $\bar{f}^c(v) = v$ in $N$. So $c$ is a multiple of $s$. Let $c=ts$, then from $ f^{s}(u) = u+w$ we have $w = f^s(u) - u$ and
 \be
 u = f^{ts}(u) = f^{(t-1)s}(u) + f^{(t-1)s}(w).
 \ee
 Applying $f^s$ to this equation we have $f^{c}(w) = w$, which implies that $c$ is also a multiple of $k$. Thus $c$ is a common multiple of $s$ and $k$, say $c=c_0[s,k]$ for some positive integer $c_0$. This holds without the assumption that $p^a(K)=0$. If $p^a(K)=0$, we will have $c_0 =p^b$ for some $0\le b\le a$. 
 \par
 To see that, let $[s,k]=ns$, where $n=\frac{k}{(s,k)}$. Start with $ f^{s}(u) = u+w$ and compute inductively, we have 
 \bea\label{E3}
 f^{ns}(u) = u+w+f^s(w)+\cdots+f^{(n-1)s}(w). 
 \eea
 Note that since $ns$ is a multiple of $k$, $(f^{ns}-I)(w)=0$, which implies 
 \be
 (f^s - I)(w+f^s(w)+\cdots+f^{(n-1)s}(w)) = 0,
 \ee
 that is\be
 w_1:=w+f^s(w)+\cdots+f^{(n-1)s}(w)
 \ee
 is a fixed point of $f^s$. If $w_1=0$, then $c=[s,k]$. However, we do not know if $w_1=0$ or not, so we use the assumption that $p^a(K)=0$. Start from (\ref{E3}): $f^{ns}(u) = u+w_1$, note that $w_1$ is a fixed point of $f^{ns}$, we have
 \be
 f^{p^ans}(u) = (f^{ns})^{p^a}(u) = u+p^aw_1 = u. \quad \mbox{(since $w_1\in K$)}
 \ee
 Thus $\ell(u)$ is a factor of $p^ans$, which implies $c_0=p^b$ for some $0\le b\le a$. 
\end{proof}
  \par\medskip
We now assume that $f:M\longrightarrow M$ is a $\mathbb{Z}$-module automorphism, where $M$ is defined by (\ref{E2}). Note that $p^{a_m}(M)=0$. For each $0\le i\le a_m$, we define
\be
M_i =\{v\in M\;|\; p^iv=0\}.
\ee
Then each $M_i$ is an $f$-submodule of $M$. Consider the $f$-submodule filtration
\bea\label{E4}
M_0=(0)\subset M_1\subset\cdots\subset M_{a_m-1}\subset M_{a_m}=M.
\eea
There are positive integers $k_i \le m,\; 1\le i\le a_m$, such that the quotients
\bea\label{E5}
M_i/M_{i-1}\cong\mathbb{Z}_p^{k_i}.
\eea
The automorphism $f$ induces automorphisms
\be
\bar{f}_i\; :\; M_i/M_{i-1}\cong\mathbb{Z}_p^{k_i}\longrightarrow
M_i/M_{i-1}\cong\mathbb{Z}_p^{k_i},\;\; 1\le i\le a_m.
\ee
Since $\mathbb{Z}_p$ is a field, the dynamics of these automorphisms can be determined as discussed in the introduction section. For each $1\le i\le a_m$, let the set of cycle lengths of $\bar{f}_i$ be $L_{ij} =\{c_{ij}\;:\; 1\le j\le \ell_i$\},
where $\ell_i$ is a positive integer depending on $i$. We assume that $c_{i1}> c_{ij},\; 1\le i\le a_m,\; 1<j\le \ell_i$. Then by Lemma \ref{L11}, for any $1\le i\le a_m$, $c_{ij}|c_{i1}$ for all $1\le j\le \ell_i$.
\par
\begin{theorem}\label{T1} 
Assume that $f:M\longrightarrow M$ is a $\mathbb{Z}$-module automorphism, where $M$ is defined by (\ref{E2}), and keep the notation introduced above. 
\par
(1) For each integer $c_{i1}$ ($1\le i\le a_m$), the dynamical system $f:M\longrightarrow M$ possesses a cycle whose length is a multiple of $c_{i1}$.
\par
(2) All cycle lengths of $f$ are factors of 
\bea\label{c1} 
p^{a_m-1}\cdot\mbox{LCM}(c_{11}, c_{21},\ldots, c_{a_{m}1}), 
\eea
where LCM stands for the least common multiple. 
\end{theorem}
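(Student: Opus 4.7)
Part (1) is a straightforward lifting argument. Pick $\bar{v}\in M_i/M_{i-1}$ lying on a cycle of maximum length $c_{i1}$ under $\bar{f}_i$, and let $v\in M_i$ be any preimage. If $f^t(v)=v$ then passing to the quotient gives $\bar{f}_i^{\,t}(\bar{v})=\bar{v}$, so $c_{i1}\mid t$; in particular $c_{i1}\mid \ell(v)$, so $v$ lies on a cycle of $f$ whose length is a multiple of $c_{i1}$.

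For part (2), set $T=\mbox{LCM}(c_{11},c_{21},\ldots,c_{a_m1})$ and $g=f^T-I$. My plan is to establish, in order: (a) $g(M_i)\subseteq M_{i-1}$ for $1\le i\le a_m$, so that $g^{a_m}$ vanishes on $M$; and (b) $(I+g)^{p^{a_m-1}}$ acts as the identity on $M$, i.e.\ $f^{p^{a_m-1}T}=I$. Claim (a) is immediate from Lemma \ref{L11}: in the finite field setting every cycle length of $\bar{f}_i$ divides the maximum $c_{i1}$, hence divides $T$, so $\bar{f}_i^{\,T}=I$ on $M_i/M_{i-1}$, which is precisely the statement that $f^T(v)-v\in M_{i-1}$ for every $v\in M_i$.

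For (b), expand $(I+g)^{p^{a_m-1}}$ by the binomial theorem. Terms with $j\ge a_m$ vanish on $M$ by (a), so only $0\le j\le a_m-1$ survive. For $1\le j\le a_m-1$, the element $g^j(v)$ lies in $M_{a_m-j}$ and is therefore annihilated by $p^{a_m-j}$; on the other hand the standard identity
\[
 v_p\!\left(\binom{p^{a_m-1}}{j}\right)=(a_m-1)-v_p(j)
\]
combined with the elementary inequality $v_p(j)\le j-1$ (since $j\ge p^{v_p(j)}\ge v_p(j)+1$ for $p\ge 2$) yields $v_p\binom{p^{a_m-1}}{j}\ge a_m-j$. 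Consequently each summand with $j\ge 1$ vanishes on $M$, and only the $j=0$ term remains. Hence $f^{p^{a_m-1}T}(v)=v$ for every $v\in M$, forcing every cycle length of $f$ to divide $p^{a_m-1}T$.

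I expect the only real obstacle to be a clean verification of step (a) from Lemma \ref{L11} (which hinges on $c_{i1}$ being the unique maximum cycle length of $\bar{f}_i$, so that all other lengths in the quotient are divisors, hence divide $T$), together with a careful bookkeeping of the $p$-adic valuation identity for $\binom{p^{a_m-1}}{j}$ at the boundary values $j=1$ and $j=a_m-1$; the rest is a routine binomial expansion.
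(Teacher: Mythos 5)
Your proposal is correct, but it proves part (2) by a genuinely different route than the paper. The paper argues by induction on $a_m$: it passes to the quotient $M/M_1$, whose induced filtration has the same subquotients $M_i/M_{i-1}$ for $i\ge 2$, obtains by induction that cycle lengths there divide $p^{a_m-2}\mbox{LCM}(c_{21},\ldots,c_{a_m1})$, and then lifts back through the projection $M\to M/M_1$ using Lemma \ref{L22} (the statement $\ell(u)=p^b[s,k]$ with $0\le b\le a$, applied with $K=M_1$ and $a=1$ since $p(M_1)=0$) together with Lemma \ref{L11} applied to $M_1$. You instead argue non-inductively: from Lemma \ref{L11} you get that $g=f^T-I$ shifts the filtration, $g(M_i)\subseteq M_{i-1}$, hence $g^{a_m}=0$ on $M$, and then the binomial expansion of $(I+g)^{p^{a_m-1}}$ (valid since $I$ and $g$ commute) together with $v_p\bigl(\binom{p^{a_m-1}}{j}\bigr)=(a_m-1)-v_p(j)\ge a_m-j$ and $p^{a_m-j}M_{a_m-j}=0$ kills every term with $j\ge 1$. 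Your valuation bookkeeping checks out ($v_p(j)\le j-1$ follows as you say, and the edge case $a_m=1$ degenerates harmlessly to $f^T=I$). Your route buys a slightly stronger conclusion stated directly: $f^{p^{a_m-1}T}=I$, i.e., the \emph{order} of $f$ divides the bound, which is exactly what Step 3 of the algorithm in Section 3 invokes (for the paper this is equivalent but only implicit, since the order of an automorphism of a finite module is the LCM of its cycle lengths); it is also self-contained, bypassing Lemma \ref{L22} entirely. What the paper's route buys is the finer per-element lifting information of Lemma \ref{L22} — how an individual cycle length factors as $p^b[s,k]$ through the extension — which is of independent use in analyzing how cycles lift through the filtration, rather than only bounding the global order.
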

\begin{proof} 
(1) For a fixed $1\le i\le a_m$, choose $u\in M_i$ such that its image $\bar{u}\in M_i/M_{i-1}$ belongs to a cycle of length $c_{i1}$. If the cycle to which $u$ belongs has length $c$, then $f^c(u)=u$ and hence $f^c(\bar{u})=\overline{f^c(u)}=\bar{u}$  in $M_i/M_{i-1}$, which implies that $c_{i1}|c$.
\par
(2) We use induction on $a_m$. There is nothing to prove if $a_m=1$ since $f$ is just an automorphism of the vector space $M=\mathbb{Z}^m_{p}$. Let $a_m \ge 2$. Then since $f$ induces an automorphism of each $f$-submodule $M_i$ in (\ref{E4}), it induces an automorphism of 
\be
M/M_1 \cong \mathbb{Z}_{p^{a_1-1}}\times \mathbb{Z}_{p^{a_2-1}}\times\cdots\times\mathbb{Z}_{p^{a_m-1}}. 
\ee
The filtration of (\ref{E4}) induces the following filtration:
\bea
\hspace{0.6cm} (0)=M_1/M_1\subset M_2/M_1\subset\cdots\subset M_{a_m-1}/M_1\subset M/M_1.
\eea
Since for $2\le i\le a_m$
\bea
(M_i/M_1)/(M_{i-1}/M_1)\cong M_i/M_{i-1},
\eea
by the induction assumption, we have that the possible cycle lengths of $M/M_1$ are factors of $r=p^{a_m-2}LCM(c_{21},\ldots, c_{a_m1})$. Now the theorem follows by noticing $p(M_1)=0$ and applying Lemma \ref{L11} to $M_1$ and Lemma \ref{L21} to the projection $M\rightarrow M/M_{1}$.  
\end{proof}
\par\medskip
\begin{corollary}\label{C21} 
If $M = \mathbb{Z}_{p^a}^m$ (i.e. the case where all $a_i$ are equal), the cycle lengths are factors of $p^{a-1}c$, where $c$ is the maximum cycle length of $M/M_{a-1}\cong \mathbb{Z}_p^m$ with the module structure induced by $f$.
\end{corollary}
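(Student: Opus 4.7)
The plan is to deduce the corollary directly from Theorem \ref{T1} by showing that, in the uniform case $M = \mathbb{Z}_{p^a}^m$, the induced automorphisms $\bar{f}_1, \ldots, \bar{f}_{a}$ on the successive quotients $M_i/M_{i-1}$ all share a single cycle structure, so that the LCM appearing in (\ref{c1}) collapses to a single number $c$. First I would make the filtration (\ref{E4}) completely explicit by observing that $M_i = \{v \in M : p^i v = 0\} = p^{a-i} M$ for $0 \le i \le a$, so the chain becomes
\[
0 = p^{a}M \subset p^{a-1}M \subset \cdots \subset pM \subset M,
\]
with each $M_i/M_{i-1} = p^{a-i}M/p^{a-i+1}M \cong \mathbb{Z}_p^m$.

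Next, I would show that multiplication by $p^{a-i}$ defines a well-posed $\mathbb{Z}$-module isomorphism
\[
\phi_i : M/pM \longrightarrow M_i/M_{i-1}, \qquad v + pM \longmapsto p^{a-i}v + p^{a-i+1}M,
\]
and that $\phi_i$ is $f$-equivariant: since $f$ is $\mathbb{Z}$-linear it commutes with multiplication by $p^{a-i}$, hence $\phi_i \circ \bar{f}_a = \bar{f}_i \circ \phi_i$. Consequently $\bar{f}_i$ is conjugate to $\bar{f}_a$ as a dynamical system on $\mathbb{Z}_p^m$, and in particular they have the same maximum cycle length. In the notation preceding Theorem \ref{T1} this reads $c_{i,1} = c_{a,1} = c$ for every $1 \le i \le a$, and therefore $\mathrm{LCM}(c_{1,1}, c_{2,1}, \ldots, c_{a,1}) = c$.

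Plugging this into the bound of Theorem \ref{T1}(2) gives that every cycle length of $f$ on $M$ divides $p^{a-1}\cdot c$, which is exactly the conclusion of the corollary. There is no substantive obstacle: the entire content is the observation that the graded pieces of the $p$-adic filtration on $M = \mathbb{Z}_{p^a}^m$ are canonically identified as $f$-modules with the single space $M/pM$. The only point requiring a moment of care is the $f$-equivariance of $\phi_i$, and that is immediate from the $\mathbb{Z}$-linearity of $f$.
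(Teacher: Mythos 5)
Your proof is correct and is essentially the paper's own argument: the paper likewise uses multiplication by $p^{a-i}$ followed by the projection to produce an $f$-module isomorphism $M/M_{a-1}\to M_i/M_{i-1}$ (your $\phi_i$, noting $M_{a-1}=pM$ in the uniform case), concludes that all quotients share one cycle structure so $c_{i1}=c$ for every $i$, and then invokes Theorem \ref{T1}(2). Your only addition is making the identification $M_i = p^{a-i}M$ explicit, which the paper leaves implicit.
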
 
\begin{proof}
Consider the onto $f$-module homomorphism $\varphi : M\rightarrow M_i/M_{i-1}$ defined by first taking $m\rightarrow p^{a-i}m,\; m\in M$, and then follows by the projection from $M_i$ onto $M_i/M_{i-1}$. This homomorphism induces an $f$-module  isomorphism $M/M_{a-1}\rightarrow M_i/M_{i-1}$ since its kernel is $M_{a-1}$. Thus all $M_i/M_{i-1}$ $(2\le i\le a$) are isomorphic as $f$-modules and have the same cycle structure, which implies that $c_{i1} = c$ for all $i$, and the corollary follows from (2) of the theorem.  
\end{proof}
\par\medskip
We remark that if $f$ is not an automorphism then the above corollary does not apply, since not every element of $M$ lies in a cycle.
\section{Computing the Cycles}
\par
Under the assumption that the cycle lengths of a linear system over a finite field can be computed, we can use Theorem \ref{T1} to find the cycles of a linear system $f : M\rightarrow M$ over a finite commutative ring $R$. Our algorithm is divided into four steps.
\par\medskip
\begin{enumerate}
\item {\bf Reduce to an automorphism.} By Theorem 2.1 of \cite{XZ}\footnote{The assumption on the module considered in Theorem 2.1 \cite{XZ} is somewhat different, but a quick check of the proof there reveals that it also applies to the case here.}  $N=m\log_2(q)$ satisfies $f^{N+1}(M) = f^N(M)$, where $q = |R|$, and $m$ is given by a presentation of the $R$-module $M$ as a quotient of the free module $R^m$:
\be
\varphi : R^m \longrightarrow M\;\;\mbox{and}\;\; R^m/\ker(\varphi) \cong M.
\ee
Therefore we have an induced automorphism $f: f^N(M)\rightarrow f^N(M)$. Consider the structure of the finitely generated abelian group $f^N(M)$, and reduce it to the case where the module is defined by (\ref{E2}). Thus we assume the module $M$ is defined by (\ref{E2}) in steps 2 -- 4.
\par\medskip
\item
{\bf Compute the number $c_{i1}$ for each induced automorphism (see the paragraph precedes Theorem 1)} 
\bea\label{E7}
\bar{f}_i : \mathbb{Z}_{p}^{k_i}\rightarrow \mathbb{Z}_{p}^{k_i}, 1\le i\le a_m.
\eea 
If the minimal polynomial of $\bar{f}_i$ is $g_i$ and $g_i=g_{i1}^{r_{i1}}\cdots g_{is_i}^{r_{is_i}}$ is its irreducible factorization, then $c_{i1} = LCM(h_{i1},\ldots, h_{is_i})$, where $h_{ij} = ord(g_{ij}^{r_{ij}})$. Since $\mathbb{Z}_p$ is a field, one can use the existing approaches \cite{Els}\cite{He}. See also the discussions later.
\par\medskip
\item {\bf Compute the order of $f$ as an element of the automorphism group of $M$.} Let $n :=p^{a_m-1}\cdot\mbox{LCM}(c_{11}, c_{21},\ldots, c_{a_{m}1})$. Then $n$ is a multiple of the order of $f$. Since the factorization of $n$ is known from step 2 and step 3, we can use the following procedure to compute the order of $f$:
\begin{center}
\begin{tabular}{|lp{8cm}|}
\hline
Input: & $n$ --- a multiple of the order of $f$.\\
Output: & $ord$ --- the order of $f$.\\
\hline
& $ord \gets n$;\\
& {\tt while ( 1 ) do} \\
& \hspace{10mm} $ordersmaller \gets 0$;\\
& \hspace{10mm} {\tt  for ( each prime factor $p$ of $ord$ )}\\
& \hspace{20mm} {\tt  if ( $f^{\frac{ord}p} = I$ )}\\
& \hspace{30mm}  $ord \gets \frac{ord}p$; \\
& \hspace{30mm}  $ordersmaller \gets 1$; \\
& \hspace{30mm}  {\tt break;} \\
& \hspace{10mm} {\tt  if ( $ordersmaller = 0$ )}\\
&\hspace{20mm} {\tt  return $ord$;}\\
\hline
\end{tabular}
\end{center}
\par\medskip 
\item {\bf Compute the cycle lengths and the cycles.} Let the order of $f$ be $\mathbf{a}$. Then the cycle lengths of $f$ are given by the factors of $\mathbf{a}$, which are known from step 2. For a factor $d$ of $\mathbf{a}$, solve  the linear system $(f^d - I)X=0$ to obtain the cycles of length $d$. 
\end{enumerate}
\par\medskip
{\bf Analysis of the algorithm.} We ignore the factors contributed by $q = |R|$ in our analysis, since for application purposes we can assume that $q$ is relatively small. 
\par
In step 1, the computation of $f^N$ takes time $O(m^3)$. Also, to find the structure of the abelian group $f^N(M)$, it requires to compute the Smith normal form of the integer matrix of $f: f^N(M)\rightarrow f^N(M)$. It is well-known that the computation of the Smith normal form takes time $O(m^3)$.
\par
In step 3, the given algorithm computes the order of $f$ in time $O(m^3)$ since computing $f^{\mathbf{a}/p}$ requires at most time $ O(m^2\sqrt{m}\log (m))$.
\par
The computations in step 4 are straightforward.
\par
Since we could not find any readily accessible reference on the algorithm and its analysis for the case where $R =\mathbb{Z}_p$ is a field, we give a discussion of the complexity of step 2 as follows.
\par
(1) There is a randomized algorithm \cite{NP} to compute the minimal polynomial $m(x)$ of a liner map
$T: \mathbb{Z}_{p}^k\rightarrow \mathbb{Z}_{p}^k$ in time $O(k^3)$ and uses $O(k)$ random vectors. Since $k\le m$, this computation requires expected time no worse than $O(m^3)$.
\par
(2) To factorize $m(x)$,  some algorithms from \cite{GG} can be used. Assume the degree of $m(x)$ is $d$
and let ${\bf M}(d)$ be the number of field operations needed for multiplying two polynomials
of degree at most $d$,  then $m(x)$ can be factorized in expected number of $O(d{\bf M}(d)\log (dp))$ field
operations. Since $d\le m$, this requires time no worse than $O(m^3)$.
\par
 (3) Note that if $g(x)$ is an irreducible polynomial with $\mbox{ord}(g) = \alpha$, then
$\mbox{ord}(g^u) = \alpha p^c$ where $c$ is the smallest integer such that $p^c\ge u$ \cite{He}.
\par
Thus, our algorithm runs in expected time $\sim \tilde{O}(m^3)$.
\par\medskip
\section{Conclusion and Examples}
We have developed an efficient algorithm to determine the cycle structure of a linear dynamical system over a finite commutative ring with identity and thus provided a solution to this problem. 
\par
We conclude our paper by considering three examples. The first example shows that the bound given by (\ref{c1}) is sharp.
\begin{example}
Let $R = \mathbb{Z}_{16}$, let $M=\mathbb{Z}_{4}\times\mathbb{Z}_{8}\times\mathbb{Z}_{16}$, and let $f : M\longrightarrow M$ be defined by
\be
A=\begin{pmatrix}
  1 & 1 & 0 \\
  0 & 1 & 1 \\
  0 & 0 & 1 \\
\end{pmatrix}
\ee
with respect to the generators $e_1 = (1,0,0)^t,\; e_2 = (0,1,0)^t,\; e_3=(0,0,1)^t$ of $M$. Then $a_m = 4$,
\begin{gather*}
M_0 = (0) \subset M_1 = <2e_1, 4e_2, 8e_3> \subset M_2 = <e_1, 2e_2, 4e_3>\\
\subset M_3 = <e_1,e_2,2e_3> \subset M_4 =M,
\end{gather*} 
and
\begin{gather*}
M_4/M_3 \cong \mathbb{Z}_2,\; M_3/M_2 \cong \mathbb{Z}_2\times\mathbb{Z}_2,\\
 M_2/M_1 \cong M_1/M_0 \cong \mathbb{Z}_2\times\mathbb{Z}_2\times\mathbb{Z}_2.
\end{gather*}
It is easy to see that the restriction of $f$ to any of these quotients is the identity map, so $c_{i1} = 1$ for $1\le i\le 4$. Thus according to Theorem \ref{T1}, the possible cycle lengths are factors of $2^3$. For this example, there are cycles for each of the possible factors of $2^3$. For instance, the cycle generated by $2e_1+2e_2$ has length $2$, the cycle generated by $e_2$ has length $4$, and the cycle generated by $e_3$ has length $8$. 
\end{example}
\par\medskip
The second example shows that there may not be a cycle of length $p^bc$ (see Corollary \ref{C21} for notation) for any $b>0$.
\begin{example} For any prime $p$ and any integers $a,n > 1$, consider the $\mathbb{Z}_{p^a}$-module $\mathbb{Z}_{p^a}^n$ and the automorphism $f$ defined by the cyclic permutation of the standard basis $(e_1,e_2,\ldots, e_n)$:
\be
e_1 \rightarrow e_{2}\rightarrow e_{3}\rightarrow \cdots\rightarrow e_{n} \rightarrow e_{1}.
\ee
Then the order of $f$ is $n$ and the cycle lengths are the factors of $n$ (for each $d|n$, the non-zero element $e_1+e_{d+1}+e_{2d+1} + \cdots + e_{n-d+1}$ generates a cycle of length $d$).
\end{example}
\par\medskip
Next, we use a simplified version of the network given by the diagram of Fig. 2B in \cite{Zhang} to construct an example. The original network consists of $29$ nodes, the $7$ red output nodes that having a single input node are omitted to make the presentation more streamline (these output nodes add very little extra to the computation). The omitted nodes are: FLIP, A20, RANTES, FasT, GZMB, MEK, and LCK. Our purpose here is to give an example from a real world network, finding a good linear model over a finite ring for the underlying biological system is beyond our scope here. We introduce the variables as in Table 1.
\par\medskip
\begin{table}[h]
  \begin{center}
  {\renewcommand{\arraystretch}{1.4}\small{\tiny 
  \begin{tabular}{|c|c|c|c|c|c|c|c|c|}
    \hline
   IL15	&  RAS	& ERK	&    JAK	& IL2RBT	& STAT3	& IFNGT	& FasL\\ \hline 	
   $x_1$ & $x_2$ & $x_3$ & $x_4$ & $x_5$	& $x_6$	& $x_7$	& $x_8$\\ \hline\hline

   PDGF  &  PDGFR &   PI3K	 &   IL2	&   BcIxL	  &       TPL2	  &  SPHK &	S1P	  \\ \hline	
   $x_9$ & $x_{10}$ & $x_{11}$ & $x_{12}$ & $x_{13}$ & $x_{14}$ & $x_{15}$ & $x_{16}$ \\ \hline\hline

   sFas   & Fas &  DISC  &  Caspase  &  Apoptosis	& IL2RAT &\cellcolor{Mygrey} &\cellcolor{Mygrey}\\ \hline
   $x_{17}$ & $x_{18}$ & $x_{19}$ & $x_{20}$ & $x_{21}$ & $x_{22}$ &\cellcolor{Mygrey} &\cellcolor{Mygrey}\\ \hline 
 \end{tabular}}
  }
  \label{tab: pairing2}
  \caption[Legend of variable names.]{\footnotesize Legend of variable names of the network given by Fig. 2B in \cite{Zhang} (with 7 red output nodes omitted).}
    \end{center}
\end{table} 
\par
\begin{example}
The base ring is $\mathbb{Z}_9$ and the state space is $\mathbb{Z}_9^{22}$. The update functions of the nodes are provided in Table 2.
\par
\begin{table}[h]
  \begin{center}
  {\renewcommand{\arraystretch}{1.4}\small{\tiny 
  \begin{tabular}{|c|c|c|c|c|c|c|c|c|}
    \hline
  $f_1$ & $f_2$ & $f_3$ & $f_4$ & $f_5$	& $f_6$	& $f_7$	& $f_8$\\ \hline 	
   $x_1$ & $x_1$ & $x_2$ & $x_1$ & $x_1$	& $x_4$	& $3x_5+x_6$	& $2x_3+x_5+3x_{14}$\\ \hline\hline

   $f_9$ & $f_{10}$ & $f_{11}$ & $f_{12}$ & $f_{13}$ & $f_{14}$ & $f_{15}$ & $f_{16}$	  \\ \hline	
   $x_9$ & $x_{9}$ & $x_{10}$ & $-x_{4}-x_{11}$ & $-x_{4}-x_{11}$ & $x_{11}$ & $2x_{11}+x_{16}$ & $x_{15}$ \\ \hline\hline

   $f_{17}$ & $f_{18}$ & $f_{19}$ & $f_{20}$ & $f_{21}$ & $f_{22}$ &\cellcolor{Mygrey} &\cellcolor{Mygrey}\\ \hline
   $x_{15}$ & $-4x_{1}-4x_{11}-x_{17}$ & $x_{18}$ & $-x_1+x_{19}$ & $x_{20}$ & $x_{12}$ &\cellcolor{Mygrey} &\cellcolor{Mygrey}\\ \hline 
 \end{tabular}}
  }
  \label{tab: pairing3}
  \caption[Formulas of the update functions.]{\footnotesize The update functions of the network given by Fig. 2B in \cite{Zhang}. The functions are $f_1 = x_1$, $f_2=x_1$, $f_3 =x_2$, ..., $f_8 = 2x_3+x_5+3x_{14}$, etc.}
    \end{center}
\end{table} 
Let $A$ be the matrix of the above linear system with respect to the standard basis $(e_1,e_2,\ldots,e_{22})$. According to Theorem 2.1 of \cite{XZ}, the upper bound of the exponent for $A^m$ to stabilize (i.e. $A^{m+1}(\mathbb{Z}_9^{22}) = A^{m}(\mathbb{Z}_9^{22})$) is $22\ln(9) < 48$. So we take $N=48$ and let $B=A^{N}$. To find the cycle structure of $A$, we need to find the cycle structure of the induced linear map (the other part is the kernel of $B$):
\be
A:B(\mathbb{Z}_9^{22})\rightarrow B(\mathbb{Z}_9^{22}).
\ee 
\par
We make the following observation: if $U$ and $V$ are invertible matrices over $\mathbb{Z}_9$ such that $UBV$ is the Smith normal form for $B$, we can obtain the cycle lengths of $A$ from the cycle lengths of $UAU^{-1}$ as follows. Note that $UABV = UAU^{-1}(UBV)$ and $UBV$ is a diagonal matrix. Since $UBV$ gives the structure of the finite abelian group under consideration, we can read the action of $UAU^{-1}$ from the corresponding upper left submatrix of $UABV$ (i.e. we can just compute $UABV$). 
\par 
Our computation showed that the abelian group generated by the columns of $UBV$ is isomorphic to $\mathbb{Z}_9^4$ and the action induced by $A$ is given by the following matrix:
\be
S=\begin{pmatrix}
  1 & 0 & 1 & 0 \\
  0 & 1 & 2 & 0 \\
  0 & 0 & 0 & 1 \\
  0 & 0 & 1 & 0
\end{pmatrix}.
\ee
The elementary divisors of $S$ over the field $\mathbb{Z}_3$ are $x+2$, $x+1$, and $x^2+x+1$, and the corresponding orders are $1,2$, and $3$. Thus, according to Theorem \ref{T1}, the possible cycles lengths are the factors of $3\cdot 6=18$. By considering $A^kB = B$, we find that the order of $S$ is $18$. This can also be obtained by computing the order of $S$ modulo $9$ directly. Thus, the cycle lengths of the linear model are $1, 2, 3, 6, 9$, and $18$. All the computations can be done using the LinearAlgebra package of MAPLE in less than one second. We remark that the computation can also be done in about the same amount of time for the full $29$-node network.
\end{example}
\subsection*{{\bf Acknowledgment}}
Part of the work in this paper was done during the visit of YJW to the Department of Mathematical Sciences at the University of Wisconsin-Milwaukee in the summer of 2015. YJW wishes to thank the University of Wisconsin-Milwaukee and its faculty for the hospitality she received during her visit. 
 
\end{document}